\numberwithin{equation}{section}
\numberwithin{figure}{section}
\theoremstyle{plain}
\newtheorem{thm}{Theorem}[section]
\theoremstyle{plain}
\newtheorem*{thm*}{Theorem}
\theoremstyle{definition}
\theoremstyle{definition}
\newtheorem*{defn*}{Defnition}
\theoremstyle{remark}
\theoremstyle{remark}
\newtheorem*{rem*}{Remark}
\theoremstyle{remark}
\theoremstyle{plain}
\newtheorem{lem}[thm]{Lemma}
\theoremstyle{plain}
\newtheorem{prop}[thm]{Proposition}
\theoremstyle{plain}
\newtheorem{cor}[thm]{Corollary}
\theoremstyle{plain}
\begin{document}

\title[Dimension of Sets of Continued Fractions]{Hausdorff Dimension of Sets of Continued Fractions with Unbounded Partial Quotients Along Subsequence}

\author{Yuefeng Tang}

\keywords{continued fraction, Hausdorff dimension.}
\subjclass[2020]{11K50, 11K55, 28A80}

\begin{abstract}
	Let $x=[a_1(x),a_2(x),\ldots]$ be the continued fraction expansion of $x\in[0,1)$. We prove that the Hausdorff dimension of
	\begin{equation*}
		E_{even}=\{x\in[0,1)\colon a_{2n}(x)\to\infty\ (n\to\infty)\}.
	\end{equation*}
	is 1/2. In general, we study the set of continued fractions with unbounded partial quotients along subsequence
	\begin{equation*}
		E_{\{k_n\}}=\{x\in[0,1)\colon a_{k_n}(x)\to\infty\ (n\to\infty)\},
	\end{equation*}
	where $\{k_n\}\subset\mathbb{N}$ is a subsequence. We show that $E_{\{k_n\}}$ has Hausdorff dimension 1/2 or 1 according to whether the set of indices $\{k_n\}_{n\geq 1}$ has positive or zero upper density respectively.
\end{abstract}

\maketitle

%\tableofcontents

\section{Introduction}\label{1}

It is known that every irrational number $x\in[0,1)$ admits a unique infinite continued fraction expansion
\begin{equation*}
	x=\cfrac{1}{a_1(x)+\cfrac{1}{a_2(x)+\cdots}},
\end{equation*}
where the unique sequence of integers $a_1(x),a_2(x),\ldots$ are called the \textit{partial quotients} of $x$. We write $x=[a_1(x),a_2(x),\ldots]$ for simplicity. Rational numbers $x\in[0,1)$ also have continued fraction expansions, taking the form
\begin{equation*}
	x=\cfrac{1}{a_1(x)+\cfrac{1}{a_2(x)+\cdots+\cfrac{1}{a_n(x)}}},
\end{equation*}
however the sequence of partial quotients $\{a_n(x)\}$ is finite. Except that we impose the additional constraint $a_n(x)>1$, the expansion is unique. Truncating the sequence of continued fractions at its $k$th term, we obtain an irreducible rational number
\begin{equation*}
	\frac{p_k(x)}{q_k(x)}=[a_1(x),a_2(x),\ldots,a_k(x)],
\end{equation*}
called the $k$th \textit{convergent} of $x$.

We focus on the set 
\begin{equation*}
	E_{even}\coloneqq\{x\in I\colon a_{2n}(x)\to \infty\ (n\to\infty)\},
\end{equation*}
where $I=[0,1)$. It is a typical fractal set of continued fractions with unbounded partial quotients. In fact, as early in 1941, Good \cite{MR0004878} studied the set
\begin{equation*}
	E\coloneqq\{x\in I\colon a_n(x)\to\infty\ (n\to\infty)\},
\end{equation*}
and proved that $\dim_H(E)=1/2$, where $\dim_H$ denotes the Hausdorff dimension. Note that $E$ is a subset of $E_{even}$, which gives an intuitive lower bound of Hausdorff dimension. 

The study of Hausdorff dimension on the sets in continued fractions is usually related to the following sets:
\begin{align*}
	E(M)&\coloneqq\{x\in I\colon a_{n}(x)\leq M,\ \forall n\in\mathbb{N}\},	\\
	F(M)&\coloneqq\{x\in I\colon a_{n}(x)\geq M,\ \forall n\in\mathbb{N}\}.
\end{align*}
The earliest paper on the Hausdorff dimension of such sets was Jarn\'{i}k's paradigmatic \cite{Jarnik1928-1929}, in which he established that for every $M\geq 8$
\begin{equation*}
	1-\frac{4}{M\log(2)}\leq\dim_H(E(M))\leq1-\frac{1}{8M\log(M)}.
\end{equation*}
%The well-known badly approximable numbers can be described as an increasing union of $E(M)$ sets.
Kurzweil improved the former bounds in his doctoral work \cite{MR51273} by proving that
\begin{equation*}
	1-\frac{0.99}{M}\leq\dim_H(E(M))\leq1-\frac{0.25}{M}
\end{equation*}
for $M\geq 1000$. Hensley \cite{MR2351741} further improved Kurzweil's result using functional analytic techniques and proved that
\begin{equation*}
	\dim_H(E(M))=1-\frac{6}{\pi^2}\frac{1}{M}-\frac{72}{\pi^4}\frac{\log(M)}{M^2}+O(\frac{1}{M^2}).
\end{equation*}
On the other hand, Good \cite{MR0004878} proved that for $M\geq 20$
\begin{equation*}
	\frac{1}{2}+\frac{1}{2\log(M+2)}\leq\dim_H(F(M))\leq\frac{1}{2}+\frac{\log\log(M-1)}{2\log(M-1)}.
\end{equation*}
Jaerisch and Kesseb\"{o}hmer \cite{MR2672614} gave an asymptotic improvement on Good's result by showing that as $M\to\infty$
\begin{equation*}
	\dim_H(F(M))\sim\frac{1}{2}+\frac{\log\log(M)}{2\log(M)}.
\end{equation*}
Recently, Das, Fishman, Simmons and Urba\'nski \cite{MR4540838} employed perturbations of the conformal iterated function system to approximate $\dim_H(E(M))$ and $\dim_H(F(M))$ with errors of order $O_p\left(\frac{\log^{p-1}(M)}{M^p}\right)$ and $O_p\left(\frac{\log\log(M)}{M^p\log(M)}\right)$, respectively.

Using a classical strategy established by Good, we approximate the upper bound of Hausdorff dimension of set
\begin{equation*}
	\{x\in I\colon a_{2n}(x)\geq M,\ \forall n\in\mathbb{N}\},
\end{equation*}
and obtain our first result.
\begin{thm}\label{main1}
	The Hausdorff dimension of the set $E_{even}$ is 1/2.
\end{thm}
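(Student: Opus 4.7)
My plan splits naturally into matching lower and upper bounds on $\dim_H(E_{even})$. The lower bound is immediate: Good's set $E=\{x\in I : a_n(x)\to\infty\}$ is contained in $E_{even}$, so Good's theorem $\dim_H(E)=1/2$ yields $\dim_H(E_{even})\geq 1/2$ without further work.

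For the upper bound I would follow the strategy signalled in the excerpt and reduce matters to bounding the dimension of the auxiliary set
\begin{equation*}
F_M\coloneqq\{x\in I : a_{2n}(x)\geq M\ \text{for all } n\in\mathbb{N}\}.
\end{equation*}
The reduction is as follows. By the definition of convergence to infinity, $E_{even}\subset\bigcup_{N\geq 1}\{x : a_{2n}(x)\geq M \text{ for all } n\geq N\}$ for every $M\geq 1$. After fixing the first $2N-2$ partial quotients of $x$, the $N$-th set on the right decomposes into a countable union over $(a_1,\ldots,a_{2N-2})\in\mathbb{N}^{2N-2}$ of bi-Lipschitz copies of $F_M$ (via the M\"obius branches of the iterated Gauss map $G^{2N-2}$). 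Countable stability of Hausdorff dimension then gives $\dim_H(E_{even})\leq\dim_H(F_M)$ for every $M$, so it suffices to prove that $\dim_H(F_M)\to 1/2$ as $M\to\infty$.

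To estimate $\dim_H(F_M)$ I would use the natural cover of $F_M$ by level-$2n$ cylinders $I_{2n}(a_1,\ldots,a_{2n})$ with $a_{2k-1}\geq 1$ free and $a_{2k}\geq M$ for $1\leq k\leq n$. The classical length bound $|I_{2n}|\leq q_{2n}^{-2}\leq\prod_{k=1}^{2n}a_k^{-2}$ causes the $s$-Hausdorff sum to factorize as
\begin{equation*}
\sum|I_{2n}|^s\leq\left(\sum_{a=1}^{\infty}a^{-2s}\right)^n\left(\sum_{a=M}^{\infty}a^{-2s}\right)^n.
\end{equation*}
Setting $s=1/2+\epsilon$, the first factor equals $\zeta(1+2\epsilon)\sim(2\epsilon)^{-1}$ while the tail is bounded by $(M-1)^{-2\epsilon}/(2\epsilon)$, so the bracketed quantity behaves like $(M-1)^{-2\epsilon}/(4\epsilon^2)$. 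A calibration of the form $\log M\gtrsim\epsilon^{-1}\log(1/\epsilon)$ then makes the bracket strictly less than $1$, and the $s$-sum tends to $0$ as $n\to\infty$. This yields $\dim_H(F_M)\leq 1/2+\epsilon$; sending $\epsilon\to 0^+$ finishes the proof.

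The main obstacle is the delicate calibration in the last step. Both $\zeta(1+2\epsilon)$ and $\sum_{a\geq M}a^{-(1+2\epsilon)}$ are singular as $\epsilon\to 0^+$, so one must choose $M$ very large in terms of $\epsilon$ in order to keep the product of these two diverging quantities below $1$. This singular balancing is the hallmark of the Good-style dimension argument, and is precisely what pins $\dim_H(E_{even})$ down to $1/2$ rather than to any larger value one might a priori expect from the freedom in the odd-indexed partial quotients.
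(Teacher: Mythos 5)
Your proposal is correct and follows essentially the same route as the paper: lower bound from Good's set $E\subset E_{even}$, and upper bound by reducing (via the invariance of dimension under prepending finitely many partial quotients) to the set $\{x: a_{2n}(x)\geq M\ \forall n\}$, covering it by even-order cylinders, and balancing the factorized sum $\zeta(2s)\sum_{k\geq M}k^{-2s}$ as $M\to\infty$. The only differences are cosmetic: the paper works with the unions $J(a_1,\ldots,a_{2n-1})=\cup_{a_{2n}\geq M}I(a_1,\ldots,a_{2n})$ and solves for the critical exponent $s(M)\sim\tfrac12+\frac{\log\log M}{\log M}$ as a function of $M$, whereas you fix $\epsilon$ and choose $M$ large, which suffices for the qualitative statement.
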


In general, one might wonder what would happen if the partial quotients grew along an arbitrary subsequence of indices instead of $\{2n\}_{n\geq 1}$. Let
\begin{equation*}
	E_{\{k_n\}}\coloneqq\{x\in I\colon a_{k_n}(x)\to\infty\ (n\to\infty)\},
\end{equation*}
where $\{k_n\}_{n\geq 1}$ is a strictly increasing sequence of positive integer. Let 
\begin{equation*}
	k(n)\coloneqq\sharp\{k_j\}\cap\{1,2,\cdots,n\},
\end{equation*}
and denote the \textit{upper and lower density} of sequence $\{k_n\}$ by
\begin{equation*}
	\overline{\mathbf{d}}(\{k_n\})\coloneqq\limsup_{n\to\infty}\frac{k(n)}{n}\quad\text{and}\quad\underline{\mathbf{d}}(\{k_n\})\coloneqq\liminf_{n\to\infty}\frac{k(n)}{n}.
\end{equation*}
If the upper and lower density coincide, i.e., the limit exists, we call
\begin{equation*}
	\mathbf{d}(\{k_n\})\coloneqq\lim_{n\to\infty}\frac{k(n)}{n},
\end{equation*}
the \textit{density} of sequence $\{k_n\}$.
We completely determine the Hausdorff dimension of $E_{\{k_n\}}$ and describe the existence of a dimension jumping phenomenon.
\begin{thm}\label{main2}
	The Hausdorff dimension of the set $E_{\{k_n\}}$ is 1/2 or 1 according as the upper density of $\{k_n\}$ is positive or zero.
\end{thm}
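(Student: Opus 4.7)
The plan is to treat the two cases $\overline{\mathbf{d}}(\{k_n\}) > 0$ and $\overline{\mathbf{d}}(\{k_n\}) = 0$ separately, establishing matching upper and lower bounds in each.

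\emph{Case 1: $\overline{\mathbf{d}}(\{k_n\}) > 0$.} The lower bound $\dim_H E_{\{k_n\}} \ge 1/2$ is immediate from $\{x : a_n(x) \to \infty\} \subseteq E_{\{k_n\}}$ together with Good's theorem. For the matching upper bound I would directly generalize the proof of Theorem~\ref{main1}. Fix $s > 1/2$ and decompose $E_{\{k_n\}} = \bigcap_{M \ge 1} \bigcup_{N \ge 1} F_{N,M}$ with $F_{N,M} := \{x : a_{k_n}(x) \ge M \text{ for all } n \ge N\}$. Cover $F_{N,M}$ by the depth-$n$ cylinders $I(a_1,\ldots,a_n)$ satisfying $a_{k_j} \ge M$ whenever $j \ge N$ and $k_j \le n$. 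Using $|I(a_1,\ldots,a_n)| \le \prod a_i^{-2}$, the $s$-sum of the cover factors as
\[
\zeta(2s)^{\,n - k'(n)}\Bigl(\sum_{a \ge M} a^{-2s}\Bigr)^{k'(n)}, \qquad k'(n) := \#\{j \ge N : k_j \le n\}.
\]
Along a subsequence $n_\ell$ realizing $k(n_\ell)/n_\ell \to \delta := \overline{\mathbf{d}}(\{k_n\}) > 0$, this is $[\zeta(2s)^{1-\delta}\,(\sum_{a \ge M} a^{-2s})^{\delta}]^{n_\ell}$ up to an $N$-dependent constant factor. Since $\sum_{a\ge M}a^{-2s} \to 0$ as $M \to \infty$, one chooses $M$ so the base is strictly below $1$, giving $H^s(F_{N,M}) = 0$ for every $N$, hence $H^s(E_{\{k_n\}}) = 0$. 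Letting $s \downarrow 1/2$ completes the upper bound.

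\emph{Case 2: $\overline{\mathbf{d}}(\{k_n\}) = 0$.} The upper bound $\dim_H E_{\{k_n\}} \le 1$ is trivial. For the lower bound, by Hensley's asymptotic $\dim_H E(M) \to 1$ cited in Section~\ref{1}, it suffices to construct, for each $M$, a subset $B_M \subseteq E_{\{k_n\}}$ with $\dim_H B_M \ge \dim_H E(M)$. Since $k(n)/n \to 0$, pick an auxiliary sequence $b_n \to \infty$ so slowly that $\sum_{j \le k(n)} \log b_j = o(n)$, and set
\[
B_M := \{x \in [0,1) : a_i(x) \in \{1,\ldots,M\}\text{ for } i \notin \{k_n\},\ a_{k_n}(x) = b_n\} \subseteq E_{\{k_n\}}.
\]
I would compute $\dim_H B_M$ via a pressure argument for the non-autonomous conformal IFS underlying $B_M$. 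Let $\mathcal{L}_{s,M} f(x) := \sum_{a \le M}(a+x)^{-2s}\,f\bigl(1/(a+x)\bigr)$ be the transfer operator of $E(M)$, with leading eigenvalue $\lambda_M(s)$, so that $\lambda_M(s_M) = 1$ for $s_M := \dim_H E(M)$; let $\mathcal{M}_{b,s} f(x) := (b+x)^{-2s}\,f\bigl(1/(b+x)\bigr)$ be the single-symbol insertion at $b$. The $s$-sum of the natural cylinder cover of $B_M$ at depth $j_n$ (the $n$th free index) decomposes as an alternating product of powers of $\mathcal{L}_{s,M}$ and insertions $\mathcal{M}_{b_j,s}$; the Ruelle--Perron--Frobenius theorem gives the estimate $\lambda_M(s)^n \prod_j b_j^{-2s}$ up to bounded multiplicative constants. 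Since $\sum\log b_j = o(n)$, the associated pressure $P(s) = \log\lambda_M(s)$ vanishes exactly at $s = s_M$. Building the $s_M$-conformal measure on $B_M$ and applying the mass distribution principle yields $\dim_H B_M \ge s_M$, and letting $M \to \infty$ completes the proof.

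\emph{Main obstacle.} The delicate step is Case~2: one must verify a spectral gap for $\mathcal{L}_{s,M}$ on a suitable Banach space (e.g.\ $C^1[0,1]$), uniformly in $s$ near $s_M$, and then show that the insertions $\mathcal{M}_{b_j,s}$ at density-zero positions $\{k_n\}$ contribute factors $\asymp b_j^{-2s}$ while preserving the projection onto the leading eigenspace of $\mathcal{L}_{s,M}$. The hypothesis $\overline{\mathbf{d}}(\{k_n\}) = 0$ and the slow growth of $(b_n)$ are used in tandem: the former ensures the density of insertions is negligible, while the latter controls the cumulative factor $\prod b_j^{-2s}$ so as not to shift the pressure.
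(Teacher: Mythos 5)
Your Case 1 (positive upper density) is essentially the paper's argument: the lower bound from $E\subseteq E_{\{k_n\}}$ and Good's theorem, and for the upper bound a cylinder cover whose $s$-sum factors as $\zeta(2s)^{n-k'(n)}\bigl(\sum_{a\ge M}a^{-2s}\bigr)^{k'(n)}$, evaluated along a subsequence realizing the upper density so that the base can be pushed below $1$ by taking $M$ large. The paper does exactly this (in the slightly more general Hirst-type setting of Proposition \ref{main3}, with $D=\mathbb{N}$), and your version is correct, including the liminf-along-a-subsequence step, which is legitimate since any cylinder depth yields an admissible cover.

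In Case 2 you construct the same subset as the paper (your $B_M$ is the paper's $E_{\{k_n\},\phi}(M)$ with $\phi(n)=b_n$ and the same slow-growth condition $\sum_{j\le k(n)}\log b_j=o(n)$), but your route to $\dim_H B_M\ge\dim_H E(M)$ — a non-autonomous transfer-operator/pressure argument with insertion operators $\mathcal{M}_{b_j,s}$, a conformal measure, and the mass distribution principle — is left incomplete at precisely its load-bearing step, as you yourself flag. Two concrete issues: (i) zero density of $\{k_n\}$ does not prevent consecutive or closely clustered insertion indices (e.g.\ adjacent pairs spread sparsely), so the blocks $\mathcal{L}_{s,M}^{m_i}$ between insertions can have $m_i$ small infinitely often, and the Ruelle--Perron--Frobenius asymptotics $\mathcal{L}^{m}\approx\lambda^{m}P$ cannot be invoked blockwise without a uniform argument handling short blocks; (ii) the mass distribution principle requires controlling $\mu(J)$ for arbitrary intervals $J$, not just cylinders, which needs a separation estimate between the children of a cylinder of $B_M$ (the paper's Lemma \ref{y-xdistance}) that your sketch omits. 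The paper reaches the same conclusion by a much more elementary device: the map deleting the digits at positions $\{k_n\}$ sends $B_M$ onto $E(M)$ and, by the slow growth of $\phi$ together with Lemma \ref{lem3}, is H\"older continuous with exponent $1/(1+\varepsilon)$, whence $\dim_H B_M\ge\frac{1}{1+\varepsilon}\dim_H E(M)$ directly. I would recommend replacing the thermodynamic-formalism machinery with this H\"older-map comparison; as written, your Case 2 has a genuine gap at the spectral/measure-construction step.
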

If the upper and lower density are not equal, we believe that there might be some singular behavior on the set. For some relevant research or discussions, we refer to Fang, Ma and Song \cite{MR4341100} and Fang, Moreira and Zhang \cite{fang2024fractalgeometrycontinuedfractions}.

\subsection*{Structure of the paper}

The present paper is organized as follows. In Section \ref{2}, we introduce some notations, define necessary concepts and present some useful lemmas. In Section \ref{3}, we follow the classical way started by Good \cite{MR0004878} to determine the Hausdorff dimension of $E_{even}$. In Section \ref{4}, we separately prove the cases of zero and positive density in our second main Theorem \ref{main2}. %furthermore, we partially generalize our result under the setting of Hirst's problem \cite{MR311581}.

\subsection*{Acknowledgement}

The author wishes to express appreciation to Professors Lingmin Liao and Yueli Yu for their invaluable comments and numerous suggestions on the paper. The author wishes to thank Cheng Liu for his suggestions for improving Theorem 1.2.

\section{Preliminary}\label{2}

Let $F$ be a subset of $\mathbb{R}^n$. For $\delta>0$, we call a countable (or finite) sequence of sets $\{U_i\}$ a \textit{$\delta$-cover} of set $F$, if $0<\abs{U_i}\leq\delta$ and $F\subset\cup_iU_i$.  Let $s\geq0$, we define
\begin{equation*}
	\mathcal{H}^s_\delta(F)\coloneqq\inf\Big\{\sum_{i=1}^{\infty}\abs{U_i}^s\colon\{U_i\}\ \text{is a $\delta$-cover of $F$}\Big\}.
\end{equation*}
Note that $\mathcal{H}^s_\delta(F)$ is a decreasing function with respect to $\delta$. We define the \textit{$s$-dimensional Hausdorff measure} of the set $F$ by
\begin{equation*}
	\mathcal{H}^s(F)=\lim_{\delta\rightarrow0}\mathcal{H}^s_{\delta}(F)=\sup_{\delta>0}\mathcal{H}^s_{\delta}(F).
\end{equation*}
For any set $F$, there exists a critical point of $s$ at which $\mathcal{H}^s(F)$ jumps from $\infty$ to $0$. Such critical point obtained by
\begin{equation*}
	\dim_H(F)=\inf\{s>0\colon\mathcal{H}^s(F)=0\}=\sup\{s>0\colon\mathcal{H}^s(F)=\infty\}
\end{equation*}
is called the \textit{Hausdorff dimension} of $F$.

Let $\frac{p_n}{q_n}=[a_0;a_1,a_2,\ldots,a_n]$ be the $n$th convergent of the real number $x=[a_0;a_1,a_2,a_3,\ldots]$. Then $p_k,q_k,0\leq k\leq n$, satisfy the following relations (see \cite{MR1451873})
\begin{align*}
	&p_{-1}=1;\quad	p_0=a_0;\quad	p_k=a_kp_{k-1}+p_{k-2},\quad	1\leq k\leq n,	\\
	&q_{-1}=0;\quad	q_0=1;\quad	q_k=a_kq_{k-1}+q_{k-2},\quad	1\leq k\leq n.	
\end{align*}
We have the identities (see \cite[Theorem 2]{MR1451873})
\begin{equation}
	\frac{p_{n-1}}{q_{n-1}}-\frac{p_n}{q_n}=\frac{(-1)^n}{q_nq_{n-1}},\quad n\geq1. \label{pre1}
\end{equation}
Given $a_1,a_2,\ldots,a_n,$ we call
\begin{equation*}
	I(a_1,a_2,\ldots,a_n)=\left\{\begin{aligned}
		&\left[\frac{p_n}{q_n},\frac{p_n+p_{n-1}}{q_n+q_{n-1}}\right),\quad\text{if $n$ is even},	\\
		&\left(\frac{p_n+p_{n-1}}{q_n+q_{n-1}},\frac{p_n}{q_n}\right],\quad\text{if $n$ is odd},	\\
	\end{aligned}\right.
\end{equation*}
a \textit{cylinder of order $n$}. In fact, $I(a_1,a_2,\ldots,a_n)$ just represents the set of numbers in $I$ which have a continued fraction expansion beginning with $a_1,a_2,\ldots,a_n$, i.e.,
\begin{equation*}
	I(a_1,a_2,\ldots,a_n)=\{x\in I\colon a_1(x)=a_1,a_2(x)=a_2,\ldots,a_n(x)=a_n\}.
\end{equation*}
It is well known ({see \cite[Theorem 12 and Theorem 13]{MR1451873}}) that 
\begin{equation}\label{pre2}
	q_n\geq2^{\frac{n-1}{2}}\quad\text{for}\quad n\geq1,\quad\text{and}\quad\abs{I(a_1,a_2,\ldots,a_n)}=\frac{1}{q_n(q_n+q_{n-1})}.
\end{equation}

In the following, we give some useful lemmas.
\begin{lem}[{\cite[Lemma 1]{MR0004878}}]\label{lemg1}
	Let $b_1,b_2,\ldots,b_n$ be $n$ given positive integers. For a subset $S\subset I$, we define $S^\prime$ to be the set of
	\begin{equation*}
		x^\prime=[b_1,b_2,\ldots,b_n,a_1,a_2,\ldots],
	\end{equation*}
	where $x=[a_1,a_2,\ldots]\in S$. Then $S^\prime\subset I(b_1,b_2,\ldots,b_n)$, and $\dim_H(S)=\dim_H(S^\prime)$.
\end{lem}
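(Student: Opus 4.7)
The containment $S' \subset I(b_1, \ldots, b_n)$ is tautological from the definition of the cylinder, so the real content of the lemma is the dimension equality. The plan is to exhibit an explicit bi-Lipschitz bijection from $I$ onto $I(b_1, \ldots, b_n)$ that carries $S$ to $S'$, and then invoke the standard fact that bi-Lipschitz maps preserve Hausdorff dimension.

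Concretely, let $p_k/q_k$ denote the convergents of $[b_1, \ldots, b_n]$. I would take the M\"obius transformation
\begin{equation*}
T(y) = \frac{p_{n-1}\, y + p_n}{q_{n-1}\, y + q_n}, \qquad y \in I.
\end{equation*}
The first step is to identify $T$ with the prepending operation: writing $y = [a_1, a_2, \ldots]$, I would apply the convergent recurrences to the finite sequence $b_1, \ldots, b_{n-1}, (b_n + y)$ and check that the resulting ratio equals $T(y)$, which by construction must equal $[b_1, \ldots, b_n, a_1, a_2, \ldots]$. This yields $T(S) = S'$.

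The second step is to bound $|T'(y)|$. Differentiating and using the classical determinant identity $p_{n-1} q_n - p_n q_{n-1} = (-1)^n$ (equivalent to \eqref{pre1}), the derivative reduces to $1/(q_{n-1} y + q_n)^2$, which for $y \in [0,1)$ is trapped between the positive constants $1/(q_n + q_{n-1})^2$ and $1/q_n^2$ depending only on $b_1,\ldots,b_n$. Hence $T$ is bi-Lipschitz on $I$, and the dimension equality follows. There is no serious obstacle in this argument; it is essentially a bookkeeping exercise in the continued-fraction formalism, and the only point that really needs care is the identification in step one of the M\"obius formula with the operation of prepending $b_1, \ldots, b_n$ to the expansion of $y$.
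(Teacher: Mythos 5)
Your argument is correct and is essentially the classical one: the paper states this lemma without proof, citing Good, and Good's own proof rests on exactly the bi-Lipschitz estimate $|y-y^*|/(q_n+q_{n-1})^2 \leq |x-x^*| \leq |y-y^*|/q_n^2$ that your M\"obius-derivative computation delivers (together with the standard identification $T(y)=[b_1,\ldots,b_n,a_1,a_2,\ldots]$). Nothing further is needed.
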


\begin{lem}[{\cite[Lemma 2]{MR0004878}}]\label{lemg2}
	Let $\{E_i\}_{i\in\mathbb{N}}$ be a sequence of nonempty sets of positive integers, and $T$ be the set of all values of $x=[a_1,a_2,\ldots]$, for which $a_i\in E_i$, $i\in\mathbb{N}$. Then, for any $n$ and any $b_1,\ldots,b_n$ with $b_i\in E_i$, $1\leq i \leq n$, $\dim_H(T\cap I(b_1,b_2,\ldots,b_n))=\dim_H(T)$.
\end{lem}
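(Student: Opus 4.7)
The plan is to prove the two inequalities defining $\dim_H(T\cap I(b_1,\ldots,b_n))=\dim_H(T)$ separately. The easy direction $\dim_H(T\cap I(b_1,\ldots,b_n))\le\dim_H(T)$ is immediate from the monotonicity of Hausdorff dimension under inclusion, since $T\cap I(b_1,\ldots,b_n)\subset T$.

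For the reverse inequality, I would first decompose $T$ according to its first $n$ partial quotients. Since each $E_i$ is a nonempty set of positive integers and hence countable, the product $E_1\times\cdots\times E_n$ is countable, and
\begin{equation*}
T = \bigcup_{(c_1,\ldots,c_n)\in E_1\times\cdots\times E_n} T\cap I(c_1,\ldots,c_n).
\end{equation*}
The countable stability of Hausdorff dimension then yields
\begin{equation*}
\dim_H(T) = \sup_{(c_1,\ldots,c_n)\in E_1\times\cdots\times E_n}\dim_H\bigl(T\cap I(c_1,\ldots,c_n)\bigr).
\end{equation*}

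The main step is to show that $\dim_H(T\cap I(c_1,\ldots,c_n))$ does not depend on the admissible choice of $(c_1,\ldots,c_n)$. Observe that $T\cap I(c_1,\ldots,c_n)$ is exactly the set of numbers $[c_1,\ldots,c_n,a_{n+1},a_{n+2},\ldots]$ with $a_j\in E_j$ for every $j>n$. Letting $S$ denote the set of all $[a_{n+1},a_{n+2},\ldots]$ with $a_j\in E_j$ for $j>n$ (a set that manifestly does not depend on the $c_i$), Lemma \ref{lemg1} applied with the prepended block $(c_1,\ldots,c_n)$ gives $\dim_H(T\cap I(c_1,\ldots,c_n))=\dim_H(S)$. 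Consequently every term in the above supremum equals $\dim_H(S)$, and in particular equals $\dim_H(T\cap I(b_1,\ldots,b_n))$. Combining the two displayed equalities produces $\dim_H(T)=\dim_H(T\cap I(b_1,\ldots,b_n))$.

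There is no serious obstacle here. All the genuine content is packaged inside Lemma \ref{lemg1}, which encodes the dimension-preserving nature of prepending a fixed finite block of partial quotients; the remainder is simply the countable-union property of Hausdorff dimension, together with a careful identification of each admissible cylinder piece of $T$ with a shifted copy of the common tail set $S$.
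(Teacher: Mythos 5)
Your proof is correct, and it follows essentially the route of the cited source: the paper itself gives no proof of this lemma (it is quoted from Good), and Good's argument is exactly your decomposition of $T$ into the countably many admissible cylinders $T\cap I(c_1,\ldots,c_n)$, the identification of each piece with the common tail set $S$ prepended by a fixed block (Lemma \ref{lemg1}), and countable stability of Hausdorff dimension. No gaps.
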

%\begin{proof}
%	We only need to prove that $\dim_H(T\cap I(b_1,b_2,\ldots,b_n))$ is independent of the particular selection of numbers $b_1,b_2,\ldots,b_n$.
%	
%	Lemma \ref{lemg1} shows that if we select $a_{i-n}\in E_i$ for $i\geq n+1$, then the transformation of lemma \ref{lemg1} clearly gives rise to the set $T\cap I(b_1,b_2,\ldots,b_n)$, since $b_1,b_2\ldots,b_n,a_1,a_2,\ldots$ belong in order to $E_1,E_2,\ldots$. Therefore the lemma follows.
%\end{proof}

\begin{cor}[{\cite[Corollary]{MR0004878}}]\label{corg1}
	Let $T$ be a nonempty set defined as in Lemma \ref{lemg2}, and $U$ be a nonempty set defined in the same way except that a finite number of the sets $E_i$ are replaced by other sets $E_i^\prime$ of positive integers. Then, $\dim_H(U)=\dim_H(T)$.
\end{cor}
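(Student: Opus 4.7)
The plan is to reduce both $T$ and $U$ to a common ``tail'' set, then transfer Hausdorff dimension through that set using the two preceding results. The key observation is that, since only finitely many $E_i$ are replaced, there is a largest index $N$ at which a replacement occurs; for every $i > N$ the defining constraints of $T$ and $U$ coincide (both ask $a_i \in E_i$).

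First I would fix $b_i \in E_i$ for $1 \le i \le N$ (possible because $T$ is nonempty) and fix $b_i^\prime$ in the corresponding modified constraint set (possible because $U$ is nonempty). Applying Lemma \ref{lemg2} to $T$ with prefix $(b_1,\ldots,b_N)$, and separately to $U$ (which itself has the form considered in Lemma \ref{lemg2}, just with a different sequence of constraint sets) with prefix $(b_1^\prime,\ldots,b_N^\prime)$, produces
\begin{equation*}
\dim_H(T) = \dim_H\bigl(T \cap I(b_1,\ldots,b_N)\bigr) \quad \text{and} \quad \dim_H(U) = \dim_H\bigl(U \cap I(b_1^\prime,\ldots,b_N^\prime)\bigr).
\end{equation*}

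Next, let $S$ denote the set of $y = [a_{N+1},a_{N+2},\ldots]$ with $a_i \in E_i$ for all $i > N$. Because replacements occur only at indices $\le N$, both $T \cap I(b_1,\ldots,b_N)$ and $U \cap I(b_1^\prime,\ldots,b_N^\prime)$ are precisely the images of $S$ under the two prefix-prepending maps. Two applications of Lemma \ref{lemg1} therefore give
\begin{equation*}
\dim_H\bigl(T \cap I(b_1,\ldots,b_N)\bigr) = \dim_H(S) = \dim_H\bigl(U \cap I(b_1^\prime,\ldots,b_N^\prime)\bigr),
\end{equation*}
and chaining the three equalities delivers $\dim_H(T) = \dim_H(U)$.

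The argument is essentially bookkeeping rather than analysis, so I do not expect any genuine obstacle. The only step that deserves care is the choice of $N$ as the \emph{last} replacement index, which is precisely what guarantees that the tail-after-$N$ descriptions of $T$ and $U$ define the same set $S$ and hence allow Lemma \ref{lemg1} to be applied on both sides with the same underlying tail.
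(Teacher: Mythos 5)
Your argument is correct and is exactly the intended derivation: the corollary is quoted from Good without proof precisely because it follows from Lemma \ref{lemg2} (to pass to a cylinder prefix on each of $T$ and $U$) combined with Lemma \ref{lemg1} (to identify both cylinder intersections with the common tail set $S$), which is what you do. The choice of $N$ as the last replaced index and the use of nonemptiness of $T$ and $U$ to select admissible prefixes are the only points needing care, and you handle both.
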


\begin{lem}[{\cite[Lemma 2.1]{MR2215567}}]\label{lem3}
	For any $n\geq1$ and $1\leq k\leq n$, we have
	\begin{equation*}
		\frac{a_k+1}{2}\leq \frac{q_n(a_1,a_2,\ldots,a_n)}{q_{n-1}(a_1,\ldots,a_{k-1},a_{k+1},\ldots,a_n)}\leq a_k+1.
	\end{equation*}
\end{lem}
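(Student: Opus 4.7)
My plan is to reduce both inequalities to short algebraic comparisons via the Euler identity for continuants, which decomposes $q_n$ in terms of continuants of sub-blocks of $(a_1,\ldots,a_n)$.

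First I would split the sequence at position $k$ and introduce the shorthand
\begin{equation*}
\alpha \coloneqq q_{k-1}(a_1,\ldots,a_{k-1}),\qquad \beta \coloneqq q_{k-2}(a_1,\ldots,a_{k-2}),
\end{equation*}
\begin{equation*}
\gamma \coloneqq q_{n-k}(a_{k+1},\ldots,a_n),\qquad \delta \coloneqq q_{n-k-1}(a_{k+2},\ldots,a_n),
\end{equation*}
with the usual boundary conventions $q_{-1}=0,\ q_0=1$. The Euler continuant identity, together with $q_k = a_k\alpha+\beta$, then yields
\begin{equation*}
q_n(a_1,\ldots,a_n) \;=\; a_k\alpha\gamma + \beta\gamma + \alpha\delta,
\end{equation*}
while the same identity applied to the shortened sequence of length $n-1$, split at position $k-1$, gives
\begin{equation*}
q_{n-1}^{(k)} \coloneqq q_{n-1}(a_1,\ldots,a_{k-1},a_{k+1},\ldots,a_n) \;=\; \alpha\gamma + \beta\delta.
\end{equation*}
The $q$-recurrence with positive partial quotients immediately gives $\alpha\geq\beta\geq 0$; and since $\delta/\gamma = [a_{k+1},\ldots,a_n]\in(0,1)$ in the non-degenerate range, one also has $\gamma\geq\delta\geq 0$.

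For the upper bound $q_n/q_{n-1}^{(k)} \leq a_k+1$, I would use the identity
\begin{equation*}
(a_k+1)\,q_{n-1}^{(k)} - q_n \;=\; \gamma(\alpha-\beta) + \delta\bigl((a_k+1)\beta - \alpha\bigr).
\end{equation*}
If $\alpha \leq (a_k+1)\beta$, both summands are non-negative; otherwise the first summand dominates the (negative) second, because $\gamma\geq\delta$ and $\alpha-\beta \geq \alpha-(a_k+1)\beta \geq 0$. For the lower bound I would compute analogously
\begin{equation*}
2q_n - (a_k+1)\,q_{n-1}^{(k)} \;=\; (a_k-1)\alpha\gamma + 2\beta\gamma + \delta\bigl(2\alpha - (a_k+1)\beta\bigr).
\end{equation*}
If $2\alpha \geq (a_k+1)\beta$ this is manifestly non-negative (using $a_k\geq 1$); in the opposite case, bounding $\delta\leq\gamma$ in the last term reduces the inequality to $(a_k+1)\alpha \geq (a_k-1)\beta$, which is immediate from $\alpha\geq\beta\geq 0$.

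The main obstacle is really just bookkeeping: the whole argument stands or falls on setting up the Euler decomposition so that exactly the same four quantities $\alpha,\beta,\gamma,\delta$ appear in both $q_n$ and $q_{n-1}^{(k)}$, after which the estimates collapse to a one-line case analysis. I would also verify the boundary indices $k=1$ and $k=n$ (where $\beta$ or $\delta$ vanishes) cause no issue: the two algebraic identities still hold under the stated conventions, and the case analysis goes through trivially.
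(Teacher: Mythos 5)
The paper does not prove this lemma; it imports it verbatim from the cited reference, so there is no in-paper argument to compare against. Your proof is correct and self-contained. The Euler continuant identity indeed gives $q_n = q_k(a_1,\dots,a_k)\,\gamma + \alpha\,\delta = a_k\alpha\gamma+\beta\gamma+\alpha\delta$ and, splitting the shortened word at position $k-1$, $q_{n-1}^{(k)}=\alpha\gamma+\beta\delta$; I checked both of your algebraic identities,
\begin{equation*}
(a_k+1)q_{n-1}^{(k)}-q_n=\gamma(\alpha-\beta)+\delta\bigl((a_k+1)\beta-\alpha\bigr),\qquad
2q_n-(a_k+1)q_{n-1}^{(k)}=(a_k-1)\alpha\gamma+2\beta\gamma+\delta\bigl(2\alpha-(a_k+1)\beta\bigr),
\end{equation*}
and the case analyses close exactly as you say, using only $\alpha\geq\beta\geq0$, $\gamma\geq\delta\geq0$ and $a_k\geq1$; the boundary cases $k=1$ and $k=n$ reduce to $\gamma-\delta\geq0$ and $\alpha-\beta\geq0$ respectively. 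One cosmetic nitpick: $\delta/\gamma=[a_{k+1},\dots,a_n]$ lies in $(0,1]$ rather than $(0,1)$ (it equals $1$ when $n=k+1$ and $a_{k+1}=1$), but you only use $\gamma\geq\delta$, which follows directly from the continuant recurrence anyway. The usual proof in the source (and in Good-style arguments) runs by induction on $n-k$ through the recurrence $q_n=a_nq_{n-1}+q_{n-2}$; your symmetric-decomposition route is a legitimate alternative that makes both bounds fall out of a single two-term identity.
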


\medskip

\section{Classical way to determine the Hausdorff dimension of $E_{even}$} \label{3}

Recall that
\begin{align*}
	E_{even}&=\{x\in I\colon a_{2n}(x)\to\infty\ (n\to\infty)\}	\\
	&=\bigcap_{M=1}^{\infty}\bigcup_{N=1}^{\infty}\bigcap_{n\geq N}\{x\in I\colon a_{2n}(x)\geq M\}
	%F_{even}(M)&=\{x\in I\colon a_{2n}(x)\geq M \text{ i.o. } n\}	\\
	%&=\bigcap_{N=1}^{\infty}\bigcup_{n\geq N}\{x\in I\colon a_{2n}(x)\geq M\}.
\end{align*}
%We see $E_{even}\subset F_{even}(M)$ for all $M\in\mathbb{N}$, which follows $\dim_H(E_{even})\leq\dim_H(F_{even}(M))$.
To simplify our notation, we define the following sets 
\begin{align*}
	E_{even}(M)&=\{x\in I\colon a_{2n}(x)\geq M \text{, as }n\to\infty\},	\\
	E_{even}(M,N)&=\{x\in I\colon a_{2n}(x)\geq M,\ \forall n\geq N\}.
\end{align*}
Then, we have
\begin{equation*}
	E_{even}=\bigcap_{M=1}^{\infty}E_{even}(M)=\bigcap_{M=1}^{\infty}\bigcup_{N\geq1}^{\infty}E_{even}(M,N).
\end{equation*}
By Corollary \ref{corg1}, we have $\dim_H(E_{even}(M,N))=\dim_H(E_{even}(M,1))$ for any $N\in\mathbb{N}$. Therefore, we have $\dim_H(E_{even}(M))=\dim_H(E_{even}(M,1))$, which implies $\dim_H(E_{even})\leq\dim_H(E_{even}(M,1))$.

It is observed that $E_{even}(1,1)\supset E_{even}(2,1)\supset E_{even}(3,1)\supset\cdots$. Hence the sequence $\{\dim_H(E_{even}(M,1))\}_{M\in\mathbb{N}}$ is decreasing. A classical method developed by Good \cite{MR0004878} tells us that we only need to consider the limiting behavior of $\dim_H(E_{even}(M,1))$ as $M\to \infty$. Note that 
\begin{align*}
	E_{even}(M,1)&=\{x\in I\colon a_{2n}(x)\geq M,\ \forall n\in\mathbb{N}\}	\\
	&=\bigcap_{N=1}^{\infty}\{x\in I\colon a_{2n}(x)\geq M, n\leq N\}.
\end{align*}
Let $J(a_1,a_2,\ldots,a_{2n-1})=\cup_{a_{2n}\geq M}I(a_1,a_2,\ldots,a_{2n})$. Then
\begin{equation*}
	\mathcal{H}^s(E_{even}(M,1))\leq\liminf_{n\to\infty}\sum_{\substack{a_2,a_4,\ldots,a_{2n-2}\geq M \\ a_1,a_3,\ldots,a_{2n-1}\in\mathbb{N}}}\abs{J(a_1,a_2,\ldots,a_{2n-1})}^s.
\end{equation*}
To simplify our notation, we write $q_{n}=q_{n}(a_1,a_2,\ldots,a_{n})$ without causing ambiguity. Observe that
%\begin{align*}
%	&\ \abs{J(a_1,a_2,\ldots,a_{2n-1})}	\\
%	=&\ \frac{p_{2n}}{q_{2n}}-\frac{p_{2n-1}}{q_{2n-1}}=\frac{1}{q_{2n}q_{2n-1}}	\\
%	=&\ \frac{1}{(Mq_{2n-1}+q_{2n-2})q_{2n-1}}	\\
%	&=\frac{1}{(M(a_{2n-1}q_{2n-2}+q_{2n-3})+q_{2n-2})(a_{2n-1}q_{2n-2}+q_{2n-3})}	\\
%	=&\ \frac{1}{((Ma_{2n-1}+1)q_{2n-2}+Mq_{2n-3})(a_{2n-1}q_{2n-2}+q_{2n-3})}	\\
%	=&\ \frac{1}{((M^2a_{2n-1}+2M)q_{2n-3}+(Ma_{2n-1}+1)q_{2n-4})((Ma_{2n-1}+1)q_{2n-3}+a_{2n-1}q_{2n-4})}	\\
%	=&\ \frac{(Mq_{2n-3}+q_{2n-4})q_{2n-3}\abs{J(a_1,a_2,\ldots,a_{2n-3})}}{((M^2a_{2n-1}+2M)q_{2n-3}+(Ma_{2n-1}+1)q_{2n-4})((Ma_{2n-1}+1)q_{2n-3}+a_{2n-1}q_{2n-4})}	\\
%	\leq&\ \frac{(M+1)\abs{J(a_1,a_2,\ldots,a_{2n-3})}}{((M^2a_{2n-1}+2M)+(Ma_{2n-1}+1)\frac{q_{2n-4}}{q_{2n-3}})((Mq_{2n-1}+1)+a_{2n-1}\frac{q_{2n-4}}{q_{2n-3}})}	\\
%	\leq&\ \frac{M+1}{(M^2a_{2n-1}+2M)(Ma_{2n-1}+1)}\cdot\abs{J(a_1,a_2,\ldots,a_{2n-3})}
%\end{align*}
\begin{align*}
	\abs{J(a_1,a_2,\ldots,a_{2n-1})}&= \frac{p_{2n}(a_1,\ldots,a_{2n-1}, M)}{q_{2n}(a_1,\ldots,a_{2n-1},M)}-\frac{p_{2n-1}}{q_{2n-1}}	\\
	%&=\frac{1}{q_{2n}(a_1,\ldots,a_{2n-1},M)q_{2n-1}}	\\
	&=\frac{q_{2n-2}(a_1,\ldots,a_{2n-3},M)q_{2n-3}}{q_{2n}(a_1,\ldots,a_{2n-1},M)q_{2n-1}}\cdot\abs{J(a_1,a_2,\ldots,a_{2n-3})}	\\
	&\leq\frac{(Mq_{2n-3}+q_{2n-4})q_{2n-3}}{Ma_{2n-1}^2a_{2n-2}^2q_{2n-3}^2}\cdot\abs{J(a_1,a_2,\ldots,a_{2n-3})}	\\
	%&=\frac{Mq_{2n-3}+q_{2n-4}}{Ma_{2n-1}^2a_{2n-2}^2q_{2n-3}}\cdot\abs{J(a_1,a_2,\ldots,a_{2n-3})}	\\
	%&=\frac{M+\frac{q_{2n-4}}{q_{2n-3}}}{Ma_{2n-1}^2a_{2n-2}^2}\cdot\abs{J(a_1,a_2,\ldots,a_{2n-3})}	\\
	&\leq\frac{M+1}{Ma_{2n-1}^2a_{2n-2}^2}\cdot\abs{J(a_1,a_2,\ldots,a_{2n-3})}.
\end{align*}
Then, for any $s$, $M$, and $n$, we have
\begin{align*}
	&\sum_{\substack{a_2,a_4,\ldots,a_{2n-2}\geq M \\ a_1,a_3,\ldots,a_{2n-1}\in\mathbb{N}}}\abs{J(a_1,a_2,\ldots,a_{2n-1})}^s	\\
	\leq&\sum_{\substack{a_2,a_4,\ldots,a_{2n-4}\geq M \\ a_1,a_3,\ldots,a_{2n-3}\in\mathbb{N}}}\abs{J(a_1,a_2,\ldots,a_{2n-3})}^s\cdot\sum_{\substack{a_{2n-1}\in\mathbb{N} \\ a_{2n-2}\geq M}}\left(\frac{M+1}{Ma_{2n-1}^2a_{2n-2}^2}\right)^s.
\end{align*}
%\begin{align*}
	%\mathcal{H}^s(E_{even}(M,1))&=\liminf_{n\to\infty}\sum_{\substack{a_2,a_4,\ldots,a_{2n-2}\geq M \\ a_1,a_3,\ldots,a_{2n-1}\in\mathbb{N}}}\abs{J(a_1,a_2,\ldots,a_{2n-1})}^s	\\
	%&\leq\liminf_{n\to\infty}\sum_{\substack{a_2,a_4,\ldots,a_{2n-2}\geq M \\ a_1,a_3,\ldots,a_{2n-1}\in\mathbb{N}}}\Big(\frac{1}{q_{2n}q_{2n-1}}\Big)^s
%\end{align*}
To analyze the Hausdorff dimension of $E_{even}$, we focus on the formula 
\begin{equation}\label{3.1}
	\sum_{\substack{a_{2n-1}\in\mathbb{N} \\ a_{2n-2}\geq M}}\left(\frac{M+1}{Ma_{2n-1}^2a_{2n-2}^2}\right)^s\leq 1, 
\end{equation}
which makes sure that $\mathcal{H}^s(E_{even}(M,1))$ converges. To find the critical value $s$ for which the sum converges, we analyze the asymptotic behavior of (\ref{3.1}) as $M\to\infty$. The critical equation is as follows
\begin{equation*}
	\sum_{i\geq M}\sum_{j\in\mathbb{N}}\left(\frac{M+1}{Mi^2j^2}\right)^s=\left(1+\frac{1}{M}\right)^s\zeta(2s)\sum_{k=M}^{\infty}\frac{1}{k^{2s}}=1,
\end{equation*}
where $\zeta(s)\coloneqq\sum_{k=1}^{\infty}k^{-s}$ is the well-known \textit{Riemann zeta function}. 

We now refine the asymptotic approximations. To simplify our calculations, we use the Landau notations $O(\cdot)$ and $o(\cdot)$. Note that
\begin{equation}\label{3.2}
	\begin{aligned}
		\left(1 + \frac{1}{M}\right)^s &= \exp\left(s \ln\left(1 + \frac{1}{M}\right)\right)	\\
		&= \exp\left(s \left( \frac{1}{M} - \frac{1}{2M^2} + O(M^{-3}) \right)\right)	\\
		&= e^{s/M} \left(1 + O_s(M^{-2})\right),
	\end{aligned}
\end{equation}
where the constant in the asymptotic notation $O_s(\cdot)$ depends on $s$.
For the tail of the Riemann zeta function, we have
\begin{equation}\label{3.3}
	\sum_{k=M}^{\infty} k^{-2s} = \int_M^{\infty} x^{-2s}  dx + O(M^{-2s}) = \frac{M^{1-2s}}{2s-1} + O(M^{-2s}),
\end{equation}
where $s>1/2$. Set $s = \frac{1}{2} + \delta$. Using the Laurent series, we have
\begin{equation}\label{3.4}
	\zeta(2s) = \zeta(1 + 2\delta) = \frac{1}{2\delta} + \gamma + O(\delta),
\end{equation}
where $\gamma$ is the \textit{Euler--Mascheroni constant} and the constant in $O(\delta)$ is absolute. Substituting the approximations (\ref{3.2})--(\ref{3.4}) into the critical equation, we have
\begin{equation*}
	e^{s/M} \left(1 + O_s(M^{-2})\right) \cdot \left( \frac{1}{2\delta} + \gamma + O(\delta) \right) \cdot \left( \frac{M^{1-2s}}{2s-1} + O(M^{-2s}) \right) = 1,
\end{equation*}
%Noting that $2s - 1 = 2\delta$, we get
%\begin{equation*}
%	e^{s/M} \left( \frac{1}{2\delta} + \gamma + O(\delta) \right) \left( \frac{M^{-2\delta}}{2\delta} + O(M^{-2\delta}) \right) = 1,
%\end{equation*}
which is equivalent to
\begin{equation}\label{3.5}
	e^{{s}/{M}} \cdot \frac{M^{-2\delta}}{(2\delta)^2} \cdot \left(1 + 2\gamma\delta + O(\delta^2) + O_{\delta}(M^{-1})\right) = 1.
\end{equation}
Note that the term $O_{\delta}(M^{-1})$ is $o(1)$ as $M\to\infty$ for any $\delta>0$. To find the critical $\delta$ as a function of $M$, taking the natural logarithms of both sides of (\ref{3.5}) gives
\begin{equation*}
	\frac{1}{2M} + \frac{\delta}{M} - 2\delta \log M - 2\log(2\delta) + \log(1+2\gamma\delta +O(\delta^2)+o(1)) = 0.
\end{equation*}
The dominant balance is between $-2\delta \log M$ and $-2\log(2\delta)$ as $M$ tends to infinity, i.e., 
\begin{equation}\label{3.6}
	-2\delta \log M \sim 2\log(2\delta),
\end{equation}
where the notation $f\sim g$ represents $\lim_{M\to\infty}f(M)/g(M)=1$ . Solving (\ref{3.6}) asymptotically, we get
\begin{equation*}
	\delta \sim -\frac{\log(2\delta)}{\log M} \sim \frac{\log \log M - \log2}{\log M} + O\left(\frac{\log \log \log M}{\log M}\right).
\end{equation*}
Thus, the critical exponent satisfies
\begin{equation*}
	s = \frac{1}{2} + \delta \sim \frac{1}{2} + \frac{\log \log M}{\log M} - \frac{\log 2}{\log M} + O\left(\frac{\log \log \log M}{\log M}\right),
\end{equation*}
which is an asymptotic approximation of the upper bound of the Hausdorff dimension of $E_{even}(M,1)$. As $M \to \infty$, we finish the proof of our first main Theorem \ref{main1}.

\section{Proof of Theorem \ref{main2} and extension to Hirst-type sets}\label{4}

In this section, we study the Hausdorff dimension of the set 
\begin{equation*}
	E_{\{k_n\}}=\{x\in I\colon a_{k_n}(x)\to\infty\ (n\to\infty) \},
\end{equation*}
where $\{k_n\}_{k\geq 1}$ is a strictly increasing sequence of positive integers. Some classical results show that the Hausdorff dimension of $E_{\{k_n\}}$ is closely related to the density of $\{k_n\}_{n\geq 1}$. We can infer from Wu and Xu \cite{MR2776116} that $\dim_H(E_{\{n^2\}})=1$, which is a specific example of our second main theorem on the case of zero density.
%in the setting of $\xi$-regular infinite iterated function system $\{f_n\}_{n\geq 1}$.

\subsection{The case of zero density}

We first consider the case of zero density, since the zero upper density implies the zero density. Our method is motivated by Wu and Xu \cite{MR2776116}, and we generalize Wu and Xu's approach using an updated construction.

Let
\begin{equation*}
	E_{\{k_n\}}(M)\coloneqq\{x\in I\colon 1\leq a_j(x)\leq M,\ j\notin\{k_n\},\ a_{k_n}(x)\to\infty\ (n\to\infty)\},
\end{equation*}
which is a subset of $E_{\{k_n\}}$. Let $\phi\colon\mathbb{N}\to D\subseteq\mathbb{N}$ be an increasing function to be chosen later and define the submersion $f\colon\mathcal{D}\to\overline{\mathcal{D}}$ by
\begin{equation*}
	(\sigma_1,\ldots,\sigma_n)\mapsto\overline{(\sigma_1,\ldots,\sigma_n)}\coloneqq(\sigma_1,\ldots,\widehat{\sigma_{k_j}},\ldots,\sigma_n),
\end{equation*}
where $\widehat{\sigma_{k_j}}$ means eliminating the terms $\{\sigma_{k_j}\colon 1\leq k_j\leq n\}$ in $(\sigma_1,\ldots,\sigma_n)$. For any $n\geq 1$, set
\begin{align*}
	D_n&\coloneqq\{(\sigma_1,\ldots,\sigma_n)\in\mathbb{N}^{n}\colon 1\leq\sigma_i\leq M \text{ and } \sigma_{k_j}=\phi(j) \text{ for all } 1\leq k_j\leq n\},	\\
	\mathcal{D}&\coloneqq\bigcup_{n=0}^{\infty}D_n,\quad (D_0\coloneqq\varnothing),	\\
	\overline{D_n}&\coloneqq\{(\sigma_1,\ldots,\sigma_n)\in\mathbb{N}^{n}\colon 1\leq\sigma_i\leq M\},	\\
	\overline{\mathcal{D}}&\coloneqq\bigcup_{n=0}^{\infty}\overline{D_n},\quad (\overline{D_0}\coloneqq\varnothing).
\end{align*}
We define the submersion of a cylinder of order $n$ of continued fractions by
\begin{equation*}
	\overline{I_n}(\sigma_1,\ldots,\sigma_n)\coloneqq I_{n-k(n)}\overline{(\sigma_1,\ldots,\sigma_n)}.
\end{equation*}

To obtain the desired result, it suffices to construct subsets of dimension arbitrarily close to 1. Let
\begin{equation*}
	E_{\{k_n\},\phi}(M)\coloneqq\left\{x\in I\colon
	1\leq a_j(x)\leq M,\ j\notin\{k_n\},\ a_{k_n}(x)=\phi(n),\ n\in\mathbb{N} \right\}.
\end{equation*}
The values of $\phi(n)$ are chosen by the following procedure.
\begin{lem}\label{njchoice}
	Let $\{k_n\}_{n\geq 1}$ be a strictly increasing sequence of positive integers with zero density. Then, for any given constant $c_1>0$, we have a sequence $\{n_j\}_{j\geq 1}$ of indices such that 
	\begin{equation*}
		c_1 k(n)\geq \sum_{j=1}^{k(n)}\log(\phi(j)+1)
	\end{equation*}
	holds for all $n$, where $k(n)=\sharp\{k_j\}\cap\{1,2,\cdots,n\}$ and $\phi(n)$ is defined as follows:
	\begin{equation}\label{4.1}
		\begin{aligned}
			\phi(1)=\cdots=\phi(n_1)=1,	\\
			\phi(n_1+1)=\cdots=\phi(n_2)=2,	\\
			\vdots	\\
			\phi(n_{j-1}+1)=\cdots=\phi(n_j)=j,	\\
			\vdots
		\end{aligned}
	\end{equation}
\end{lem}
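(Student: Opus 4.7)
The plan is to construct the sequence $\{n_j\}_{j \ge 1}$ inductively, choosing each $n_j$ greedily so as to preserve the desired bound.

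First I would reformulate the target inequality. Let $S(m) := \sum_{i=1}^{m}\log(\phi(i)+1)$. Since the counting function $k$ attains every nonnegative integer value as $n$ varies over $\mathbb{N}$, the condition $c_1 k(n) \ge S(k(n))$ for every $n$ is equivalent to $S(m) \le c_1 m$ for every $m \ge 1$. By the construction (\ref{4.1}), $\phi$ is piecewise constant on the blocks $(n_{j-1}, n_j]$, so $S$ is piecewise linear with slope $\log(j+1)$ on the $j$-th block. Writing $T_j := c_1 n_j - S(n_j)$ for the slack, we have $T_0 = 0$ and
\[
T_j = T_{j-1} + (n_j - n_{j-1})\bigl(c_1 - \log(j+1)\bigr);
\]
by linearity, the full condition is equivalent to $T_j \ge 0$ for every $j \ge 0$.

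Next I would set up the induction. For $j$ in the profitable regime $\log(j+1) \le c_1$, any choice $n_j > n_{j-1}$ preserves $T_j \ge T_{j-1} \ge 0$, so I would take the block length $n_j - n_{j-1}$ very large and build up a sizeable reserve. For $j$ in the lossy regime $\log(j+1) > c_1$, each additional unit of block length costs $\log(j+1) - c_1$ of reserve; I would pick $n_j - n_{j-1} \ge 1$ as large as possible subject to $(n_j - n_{j-1})(\log(j+1) - c_1) \le T_{j-1}$, which preserves $T_j \ge 0$.

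The main obstacle will be showing that the greedy procedure never stalls, so that $n_j - n_{j-1} \ge 1$ remains feasible at every stage, and hence $n_j \to \infty$ and $\phi(n) \to \infty$, as required by the enclosing definition of $E_{\{k_n\},\phi}(M)$. This amounts to maintaining $T_{j-1} \ge \log(j+1) - c_1$ for all $j$, which is delicate because $\log(j+1) \to \infty$. The zero-density hypothesis on $\{k_n\}$ enters as the amortization device: the sparsity $k_m/m \to \infty$ ensures that the positions $n$ relevant to each block become progressively sparser, so the reserve built up during the profitable blocks can be stretched to cover indefinitely many lossy blocks. Carrying out this bookkeeping---showing precisely how the reserve is replenished by the sparsity of $\{k_n\}$ so that the greedy induction can be extended to every $j \ge 1$---is the principal technical content I expect the proof to emphasize.
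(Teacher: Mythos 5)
Your reduction of the displayed inequality to ``$S(m)\le c_1 m$ for every $m$'' is faithful to the statement as printed, but it leads to a target that no choice of $\{n_j\}$ can meet, and this is where the proposal breaks down. The construction needs every $n_j$ to be finite (otherwise $a_{k_n}(x)=\phi(n)$ does not tend to infinity), hence $\phi(m)\to\infty$, and then
\begin{equation*}
\frac{S(m)}{m}\;\ge\;\frac{1}{m}\sum_{m/2<i\le m}\log(\phi(i)+1)\;\ge\;\tfrac12\log\bigl(\phi(\lceil m/2\rceil)+1\bigr)\;\longrightarrow\;\infty ,
\end{equation*}
so $S(m)>c_1m$ for all large $m$ no matter how the blocks are chosen. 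In your own bookkeeping: only the finitely many blocks with $\log(j+1)\le c_1$ are profitable, the reserve $T_j$ available after them is finite, and every later block costs at least $\log(j+1)-c_1\to\infty$ per unit of length, so the greedy procedure necessarily stalls after finitely many blocks. The hoped-for rescue --- that the zero-density hypothesis ``amortizes'' the losses --- cannot materialize, because your reformulation has already eliminated $\{k_n\}$ from the problem: $S(m)\le c_1m$ is a condition on $\phi$ alone, and the density of $\{k_n\}$ can never re-enter.

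What the paper actually establishes (see the final line of its proof, and the way the lemma is consumed in the proof of Lemma \ref{1+epsilon}) is the bound $\sum_{j=1}^{k(n)}\log(\phi(j)+1)\le c_1\, n$, with $n$ rather than $k(n)$ on the large side; the displayed inequality in the lemma should be read with that right-hand side. This weaker bound is both true and short to prove, and it is exactly where zero density does its work: since $k(n)/n\to0$, one picks thresholds $N_j$ with $k(n)/n\le c_1/\log(j+1)$ for all $n>N_j$ and starts the $j$-th block only after the sequence has passed $N_j$ (the paper takes $n_j$ to be the least index with $k_{n_j}\ge N_j$ and $n_j>n_{j-1}$). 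Then for any $n$ whose count $k(n)$ falls in the $j$-th block, every summand is at most $\log(j+1)$ and $n$ is already so large that $k(n)\log(j+1)\le c_1 n$. The density hypothesis enters through the gap between $n$ and $k(n)$, which your slack variable $T_j=c_1n_j-S(n_j)$ never sees; to repair the argument you must abandon the reduction to $S(m)\le c_1m$ and track $c_1n-S(k(n))$ instead.
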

\begin{proof}
	Since $\lim_{n\to\infty}\frac{k(n)}{n}=0$, there exists a sequence of positive integers $\{N_j\}_{j\geq 1}$ such that for all $n>N_j$
	\begin{equation*}
		\frac{k(n)}{n}\leq\frac{c_1}{\log(j+1)}.
	\end{equation*}
	Let $n_j$ be the least integer such that $k_{n_j}\geq N_j$ and $n_j>n_{j-1}$. Then, we can determine $\{n_j\}$ by induction.
	
	For a fixed $n$, there exists a unique $j$ satisfying $n_{j-1}<k(n)\leq n_j$. Thus
	\begin{align*}
		\sum_{j=1}^{k(n)}\log(\phi(j)+1)&= n_1\log2+(n_2-n_1)\log3+\ldots+(k(n)-n_{j-1})\log(j+1)	\\
		&\leq k(n)\log(j+1)	\\
		&\leq c_1n.
	\end{align*}
	Since $n$ is arbitrary, we finish the proof.
\end{proof}

\begin{lem}\label{1+epsilon}
	For any $\varepsilon>0$, there exists $N_0=N_0(\varepsilon)$ such that for any $n\geq N_0$ and any $(\sigma_1,\ldots,\sigma_n)\in{D}_n$, we have 
	\begin{equation*}
		\abs{I_n(\sigma_1,\ldots,\sigma_n)}\geq\abs{\overline{I_n}(\sigma_1,\ldots,\sigma_n)}^{1+\varepsilon}.
	\end{equation*}
\end{lem}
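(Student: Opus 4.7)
The plan is to relate the two cylinder lengths by comparing their denominators $q_n$ and $\overline{q}_{n-k(n)}$, iterate Lemma \ref{lem3} over all the removed indices, and then invoke the careful growth bound on $\phi$ provided by Lemma \ref{njchoice} to beat the factor $\varepsilon$.

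First, I would rewrite what needs to be shown in terms of $q$'s. By \eqref{pre2}, $|I_n(\sigma_1,\ldots,\sigma_n)|\asymp q_n^{-2}$ and likewise $|\overline{I_n}(\sigma_1,\ldots,\sigma_n)|\asymp \overline{q}_{n-k(n)}^{-2}$, where $\overline{q}_{n-k(n)}=q_{n-k(n)}(\overline{(\sigma_1,\ldots,\sigma_n)})$. Taking logarithms, the desired inequality reduces (up to an additive $O(1)$ that is harmless for large $n$) to
\begin{equation*}
\log q_n \leq (1+\varepsilon)\log \overline{q}_{n-k(n)} + O(1).
\end{equation*}
Hence the whole game is to bound $\log q_n - \log \overline{q}_{n-k(n)}$.

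Next, I would apply Lemma \ref{lem3} iteratively, removing one indexed entry $\sigma_{k_j}=\phi(j)$ at a time. Each removal contributes a factor at most $\sigma_{k_j}+1=\phi(j)+1$ to the ratio $q_n/\overline{q}_{n-k(n)}$, so after peeling off all $k(n)$ indexed entries we obtain
\begin{equation*}
\frac{q_n(\sigma_1,\ldots,\sigma_n)}{\overline{q}_{n-k(n)}(\overline{(\sigma_1,\ldots,\sigma_n)})} \;\leq\; \prod_{j=1}^{k(n)}(\phi(j)+1).
\end{equation*}
By Lemma \ref{njchoice}, for the constant $c_1>0$ chosen below, the logarithm of the right-hand side is at most $c_1 n$ for every $n$. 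Thus $\log q_n \leq \log \overline{q}_{n-k(n)} + c_1 n$.

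Finally, I would produce a lower bound for $\log \overline{q}_{n-k(n)}$ that grows linearly in $n$, so that the additive $c_1n$ can be absorbed into $\varepsilon \log \overline{q}_{n-k(n)}$. Since $k(n)/n\to 0$, we have $n-k(n)\geq n/2$ for $n$ large, and \eqref{pre2} gives $\overline{q}_{n-k(n)}\geq 2^{(n-k(n)-1)/2}$, so
\begin{equation*}
\log \overline{q}_{n-k(n)} \;\geq\; \tfrac{(n-k(n)-1)\log 2}{2} \;\geq\; \tfrac{n\log 2}{5}
\end{equation*}
for $n$ large enough. Combining with the previous estimate, the target inequality $\log q_n \leq (1+\varepsilon)\log \overline{q}_{n-k(n)}+O(1)$ reduces to $c_1 n \leq \varepsilon \cdot \tfrac{n\log 2}{5} + O(1)$, which is satisfied for all sufficiently large $n$ provided we chose $c_1 < \tfrac{\varepsilon \log 2}{5}$ in Lemma \ref{njchoice}. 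Taking $N_0$ large enough to accommodate both the density condition and the $O(1)$ terms completes the proof.

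The main subtlety will be bookkeeping the $O(1)$ constants coming from the comparison $|I_n|\asymp q_n^{-2}$ (since $q_n(q_n+q_{n-1})$ is only up to a factor of $2$ away from $q_n^2$), and ensuring that the iterated application of Lemma \ref{lem3} is legitimate when entries are removed in sequence; both are routine. The conceptual heart is simply the ability to choose $c_1$ arbitrarily small in Lemma \ref{njchoice}, which in turn rests on the zero density hypothesis.
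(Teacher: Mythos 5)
Your proposal is correct and follows essentially the same route as the paper: both iterate Lemma \ref{lem3} to bound $q_n/\overline{q}_{n-k(n)}$ by $\prod_{j\le k(n)}(\phi(j)+1)$, invoke Lemma \ref{njchoice} with a sufficiently small $c_1$ (the paper takes $c_1=\varepsilon\log 2/2$) to make this product grow like $e^{c_1 n}$, and absorb it into $\overline{q}_{n-k(n)}^{\,2\varepsilon}$ via the lower bound $q_m\ge 2^{(m-1)/2}$. The only differences are cosmetic (logarithmic bookkeeping and slightly different constants), so nothing further is needed.
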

\begin{proof}
	For any $\varepsilon>0$, by Lemma \ref{njchoice}, setting $c_1=\varepsilon\log(2)/2$, we can determine the series of indices $\{n_j\}$ and $\phi(n)$ immediately. Since $\lim_{n\to\infty}\frac{k(n)}{n}=0$, there exists an $N_0=N_0(\varepsilon)$ such that for all $n\geq N_0$,
	\begin{equation*}
		\frac{1}{2}n\cdot\varepsilon\log2\geq\log2+2\varepsilon\log2+k(n)\cdot\varepsilon\log2.
	\end{equation*}
	Then, we have 
	\begin{equation*}
		q_{n-k(n)}^{2\varepsilon}\overline{(\sigma_1,\ldots,\sigma_n)}\geq 2^{(n-k(n)-2)\varepsilon}\geq 2\prod_{j=1}^{k(n)}(\sigma_{k_j}+1)^2,
	\end{equation*}
	for any $(\sigma_1,\ldots,\sigma_n)\in\mathcal{D}_n$.
	Thus, by Lemma \ref{lem3}, we have
	\begin{align*}
		\abs{I_n(\sigma_1,\ldots,\sigma_n)}&\geq\frac{1}{2q_n^2(\sigma_1,\ldots,\sigma_n)}	\\
		&\geq\frac{1}{2q_{n-k(n)}^2\overline{(\sigma_1,\ldots,\sigma_n)}\prod_{j=1}^{k(n)}(\sigma_{k_j}+1)^2}	\\
		&\geq\frac{1}{q_{n-k(n)}^{2+2\varepsilon}\overline{(\sigma_1,\ldots,\sigma_n)}}	\\
		&\geq\abs{\overline{I_n}(\sigma_1,\ldots,\sigma_n)}^{1+\varepsilon}.
	\end{align*}
\end{proof}

\begin{lem}\label{y-xdistance}
	Given $n\geq N_0$, and $(\sigma_1,\ldots,\sigma_n)\in\mathcal{D}_n$, for any $x=[\sigma_1,\ldots,\sigma_n,x_{n+1},\ldots]\in E_{\{k_n\},\phi}(M)$ and any $y=[\sigma_1,\ldots,\sigma_n,y_{n+1},\ldots]\in E_{\{k_n\},\phi}(M)$ with $x_{n+1}\ne y_{n+1}$, we have
	\begin{equation*}
		\abs{x-y}\geq \frac{1}{9M^3}\abs{I_n(\sigma_1,\ldots,\sigma_n)}.
	\end{equation*}
\end{lem}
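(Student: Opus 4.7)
The plan is to pass through the tail via the M\"obius parametrization of the cylinder $I_n(\sigma_1,\ldots,\sigma_n)$, and then exploit membership in $E_{\{k_n\},\phi}(M)$ to control the tail distance. Setting $u=[x_{n+1},x_{n+2},\ldots]$ and $v=[y_{n+1},y_{n+2},\ldots]$, the identity $p_{n-1}q_n-p_nq_{n-1}=(-1)^n$ from (\ref{pre1}) applied to $x=(p_n+p_{n-1}u)/(q_n+q_{n-1}u)$ (and similarly for $y$) gives
\begin{equation*}
|x-y|=\frac{|u-v|}{(q_n+q_{n-1}u)(q_n+q_{n-1}v)}.
\end{equation*}
Since $u,v\in(0,1]$ and $q_{n-1}\leq q_n$, the denominator is at most $(q_n+q_{n-1})^2$, so combined with $|I_n(\sigma_1,\ldots,\sigma_n)|=1/(q_n(q_n+q_{n-1}))$ from (\ref{pre2}) this yields $|x-y|\geq\tfrac{1}{2}|u-v|\cdot|I_n(\sigma_1,\ldots,\sigma_n)|$, reducing everything to a lower bound on $|u-v|$.

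Because $x_{n+1}\neq y_{n+1}$ while every partial quotient at an index $k_j$ is fixed at $\phi(j)$, the index $n+1$ cannot lie in $\{k_j\}$; hence $x_{n+1},y_{n+1}\in\{1,\ldots,M\}$ are distinct integers. Writing $u=1/(x_{n+1}+\alpha_x)$ and $v=1/(y_{n+1}+\alpha_y)$ with $\alpha_x=[x_{n+2},x_{n+3},\ldots]$ and $\alpha_y=[y_{n+2},y_{n+3},\ldots]$, and taking (without loss of generality) $k:=y_{n+1}-x_{n+1}\geq 1$, a direct computation gives
\begin{equation*}
|u-v|=\frac{|k+(\alpha_y-\alpha_x)|}{(x_{n+1}+\alpha_x)(y_{n+1}+\alpha_y)}.
\end{equation*}
The crux is a uniform bound $|\alpha_x-\alpha_y|\leq M/(M+1)$, and this splits on whether $n+2\in\{k_j\}$. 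If yes, then $x_{n+2}=y_{n+2}=\phi(j)$ forces $\alpha_x,\alpha_y\in(1/(\phi(j)+1),1/\phi(j)]$, an interval of length at most $1/2$; if no, then $x_{n+2},y_{n+2}\leq M$, so $\alpha_x,\alpha_y\geq 1/(M+1)$ and their difference is at most $M/(M+1)$. Either way the claimed bound holds for $M\geq 1$.

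Feeding this into the formula, the numerator of $|u-v|$ is at least $1-M/(M+1)=1/(M+1)$ in the worst case $k=1$ (and at least $1$ for $k\geq 2$), while the denominator is at most $(x_{n+1}+1)(y_{n+1}+1)\leq M(M+1)$, the maximum of such a product over distinct pairs in $\{1,\ldots,M\}$. Hence $|u-v|\geq 1/(M(M+1)^2)$, so
\begin{equation*}
|x-y|\geq\frac{|I_n(\sigma_1,\ldots,\sigma_n)|}{2M(M+1)^2}\geq\frac{|I_n(\sigma_1,\ldots,\sigma_n)|}{9M^3},
\end{equation*}
where the final inequality reduces to $7M^2-4M-2\geq 0$, valid for every $M\geq 1$. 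The genuinely delicate step is the uniform control of $|\alpha_x-\alpha_y|$: without the hypothesis $x,y\in E_{\{k_n\},\phi}(M)$, the difference $\alpha_y-\alpha_x$ could approach $-1$ and the numerator of $|u-v|$ could vanish when $k=1$, so the argument essentially uses that at each later index the partial quotients of $x$ and $y$ are either both the common value $\phi(j)$ or both bounded by $M$.
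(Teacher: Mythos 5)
Your proof is correct and follows essentially the same route as the paper's: both arguments descend to depth $n+2$, split into the same two cases according to whether $n+2\in\{k_j\}$, and use the M\"obius form of the cylinder map to see that the constraint at index $n+2$ (either fixed equal to $\phi(j)$ or bounded by $M$) keeps the numerator away from zero; you merely organize the computation through the tail variables $u,v$ rather than through explicit endpoints of order-$(n+2)$ cylinders, arriving at the slightly sharper constant $1/(2M(M+1)^2)$.
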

\begin{proof}
	Without loss of generality, we assume that $x<y$ and $n$ is even. We distinguish two cases.
	
	\textbf{Case I.} $n+2=k_j$ for some $j\in\mathbb{N}$.
	
	In this case, the distance between $x$ and $y$ is greater than the distance between the right end point of $I(\sigma_1,\ldots,\sigma_n,x_{n+1},\phi(j))$ and the left end point of $I(\sigma_1,\ldots,\sigma_n,y_{n+1},\phi(j))$. Thus,
	\begin{align*}
		y-x&\geq\abs{\frac{(y_{n+1}+\frac{1}{\phi(j)})p_n+p_{n-1}}{(y_{n+1}+\frac{1}{\phi(j)})q_n+q_{n-1}}-\frac{(x_{n+1}+\frac{1}{\phi(j)+1})p_n+p_{n-1}}{(y_{n+1}+\frac{1}{\phi(j)+1})q_n+q_{n-1}}}	\\
		&=\frac{\abs{y_{n+1}-x_{n+1}+\frac{1}{\phi(j)}-\frac{1}{\phi(j)+1}}}{\left((y_{n+1}+\frac{1}{\phi(j)})q_n+q_{n-1}\right)\left((x_{n+1}+\frac{1}{\phi(j)+1})q_n+q_{n-1}\right)}	\\
		&\geq\frac{1}{9M^2q_n^2}	\\
		&\geq\frac{1}{9M^2}\abs{I_n(\sigma_1,\ldots,\sigma_n)}.
	\end{align*}
	
	\textbf{Case II.} $n+2\notin\{k_j\}$.
	
	Similarly, we have
	\begin{align*}
		y-x&\geq\abs{\frac{(y_{n+1}+\frac{1}{y_{n+2}})p_n+p_{n-1}}{(y_{n+1}+\frac{1}{y_{n+2}})q_n+q_{n-1}}-\frac{(x_{n+1}+\frac{1}{x_{n+2}+1})p_n+p_{n-1}}{(y_{n+1}+\frac{1}{x_{n+2}+1})q_n+q_{n-1}}}	\\
		&=\frac{\abs{y_{n+1}-x_{n+1}+\frac{1}{y_{n+2}}-\frac{1}{x_{n+2}+1}}}{\left((y_{n+1}+\frac{1}{y_{n+2}})q_n+q_{n-1}\right)\left((x_{n+1}+\frac{1}{x_{n+2}+1})q_n+q_{n-1}\right)}	\\
		&\geq\frac{1}{9M^3q_n^2}	\\
		&\geq\frac{1}{9M^3}\abs{I_n(\sigma_1,\ldots,\sigma_n)}.
	\end{align*}
\end{proof}

%\begin{proof}[Proof of Theorem \ref{main2}]
In the following, we prove the case of zero density in Theorem \ref{main2}. %Our strategy is to construct a map which guarantees only a small loss of dimension.

	The submersion $f\colon \mathcal{D}\to\overline{\mathcal{D}}$ induces a map $\widetilde{f}\colon E_{\{k_n\},\phi}(M)\to E(M)$. For any $x=[\sigma_1,\sigma_2,\ldots,\sigma_n,\ldots]\in E_{\{k_n\},\phi}$, let $\widetilde{f}(x)=\tilde{x}\coloneqq\lim_{n\to\infty}\overline{[\sigma_1,\sigma_2,\ldots,\sigma_n]}$. By Lemmas \ref{1+epsilon} and \ref{y-xdistance}, we know that when $x,y\in E_{\{k_n\},\phi}(M)$ satisfy the condition in Lemma \ref{y-xdistance}, \begin{equation*}
		\abs{x-y}\geq\frac{1}{9M^3}\abs{I_n(\sigma_1,\ldots,\sigma_n)}\geq\frac{1}{9M^3}\abs{\overline{I_n}(\sigma_1,\ldots,\sigma_n)}^{1+\varepsilon}.
	\end{equation*}
	By definition, we immediately know that $\widetilde{f}(x),\widetilde{f}(y)\in\overline{I_n}(\sigma_1,\ldots,\sigma_n)$ and 
	\begin{equation*}
		\abs{\widetilde{f}(x)-\widetilde{f}(y)}\leq\abs{\overline{I_n}(\sigma_1,\ldots,\sigma_n)}.
	\end{equation*}
	Thus, \begin{equation*}
		\abs{\widetilde{f}(x)-\widetilde{f}(y)}\leq(9M^3)^{\frac{1}{1+\varepsilon}}\cdot\abs{x-y}^{\frac{1}{1+\varepsilon}}.
	\end{equation*}
	Observing $\widetilde{f}(E_{\{k_n\},\phi}(M))=E(M)$, by \cite[Proposition 2.3]{MR3236784}, we have
	\begin{equation*}
		\dim_H(E_{\{k_n\}}(M))\geq\frac{1}{1+\varepsilon}\dim_H(E(M)).
	\end{equation*}
	Since $\varepsilon$ is arbitrary, together with the fact that $\dim_H(E(M))\to 1$ as $M\to\infty$, we conclude that $\dim_H(E_{\{k_n\}})=1$.
	
%\end{proof}

\subsection{The case of positive upper density}

Indeed, we can partially generalize our result to Hirst's problem \cite{MR311581} in the setting of $\xi$-regular infinite iterated function system and we refer to \cite{MR4880588} and reference therein for some recent progress. For an infinite subset $D\subseteq\mathbb{N}$, Hirst considered the set of continued fractions
\begin{equation*}
	E(D)\coloneqq\{x\in I\colon a_n(x)\in D,\ \forall n\in\mathbb{N},\ \text{and}\ a_n(x)\to\infty\ (n\to\infty)\}.
\end{equation*}
Let $\tau(D)$ be the \textit{exponent of convergence} of the series $\{{1}/{n}\}_{n\geq1}$, i.e.,
\begin{equation*}
	\tau(D)\coloneqq\inf\left\{s\geq 0\colon \sum_{n\in D}\frac{1}{n^s}<\infty\right\}.
\end{equation*}
Hirst proved that $\dim_H(E(D))\leq\tau(D)/2$, and conjectured that $\tau(D)/2$ is the exact value of the Hausdorff dimension. The conjecture was resolved by Wang and Wu \cite{MR2409173}. Our generalization is based on their result.%There are many researches after them, we refer to \cite{bibid}.

\begin{prop}\label{main3}
	Let $D$ be an infinite subset of $\mathbb{N}$ and $\{k_n\}_{n\geq 1}$ be a set of indices with positive upper density. The Hausdorff dimension of set 
	\begin{equation*}
		E_{\{k_n\}}(D)\coloneqq\{x\in I\colon a_n(x)\in D,\ \forall n\in\mathbb{N},\ \textnormal{and}\ a_{k_n}(x)\to\infty\ (n\to\infty) \}
	\end{equation*}
	is equal to $\tau(D)/2$.
\end{prop}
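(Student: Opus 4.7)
The plan is to establish the matching inequalities $\dim_H(E_{\{k_n\}}(D))\ge\tau(D)/2$ and $\dim_H(E_{\{k_n\}}(D))\le\tau(D)/2$. The lower bound is immediate from the inclusion $E(D)\subseteq E_{\{k_n\}}(D)$: requiring $a_n(x)\to\infty$ for every $n$ is strictly stronger than requiring $a_{k_n}(x)\to\infty$ along the subsequence, while both sets share the restriction $a_n(x)\in D$ for all $n$. Hence by the Wang--Wu theorem \cite{MR2409173}, $\dim_H(E_{\{k_n\}}(D))\ge\dim_H(E(D))=\tau(D)/2$, and the positive-upper-density hypothesis plays no role here.

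For the upper bound, I would set
\begin{equation*}
	E_{\{k_n\}}(D,M,N)\coloneqq\{x\in I\colon a_j(x)\in D\ \forall j\in\mathbb{N},\ a_{k_n}(x)\ge M\ \forall n\ge N\},
\end{equation*}
so that $E_{\{k_n\}}(D)\subseteq\bigcup_{N\ge 1}E_{\{k_n\}}(D,M,N)$ for every $M\ge 1$. Iterating Corollary \ref{corg1} (each step changes only one constraint set $E_{k_n}$ from $D$ to $D\cap[M,\infty)$) gives $\dim_H E_{\{k_n\}}(D,M,N)=\dim_H E_{\{k_n\}}(D,M,1)$, so it is enough to bound $\dim_H E_{\{k_n\}}(D,M,1)$. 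Covering by cylinders of order $n$ and combining $\abs{I(a_1,\ldots,a_n)}\le q_n^{-2}$ from \eqref{pre2} with the elementary inductive bound $q_n\ge\prod_{i=1}^{n}a_i$, the $s$-sum factors across positions as
\begin{equation*}
	\sum_{\substack{a_j\in D,\,j\le n\\ a_{k_i}\ge M,\,k_i\le n}}\abs{I(a_1,\ldots,a_n)}^{s}\ \le\ A(s)^{\,n-k(n)}\,B(s,M)^{k(n)},
\end{equation*}
where $A(s)\coloneqq\sum_{a\in D}a^{-2s}$ is finite for $s>\tau(D)/2$ by the definition of $\tau(D)$, and $B(s,M)\coloneqq\sum_{a\in D,\,a\ge M}a^{-2s}\to 0$ as $M\to\infty$.

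The decisive step, and where positive upper density is indispensable, is to make this bound vanish along a suitable sequence of indices. With $\overline{\mathbf{d}}(\{k_n\})=d>0$, I would choose $n_i\to\infty$ realising $k(n_i)/n_i\to d$; the bound then reads $\bigl(A(s)^{\,1-k(n_i)/n_i}B(s,M)^{k(n_i)/n_i}\bigr)^{n_i}$, whose base tends to $A(s)^{1-d}B(s,M)^{d}$. Since $d>0$ and $B(s,M)\to 0$, taking $M$ large enough drives this limiting base strictly below $1$; combined with the uniform diameter bound $\abs{I(a_1,\ldots,a_n)}\le 2^{1-n}\to 0$ (via $q_n\ge 2^{(n-1)/2}$), this yields $\mathcal{H}^{s}(E_{\{k_n\}}(D,M,1))=0$, hence $\dim_H\le s$, and letting $s\downarrow\tau(D)/2$ completes the proof. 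The principal obstacle is precisely this dominant-balance computation: were $d=0$, no choice of $M$ could force the product below $1$ because the exponent $n-k(n)$ on $A(s)$ would eventually swamp the decaying factor $B(s,M)^{k(n)}$ --- which is exactly the mechanism that yields full dimension in the zero-density regime treated in the previous subsection.
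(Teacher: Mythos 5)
Your proposal is correct and follows essentially the same route as the paper: the lower bound comes from $E(D)\subseteq E_{\{k_n\}}(D)$ together with the Wang--Wu theorem, and the upper bound comes from factoring the $s$-dimensional sum over cylinders into $A(s)^{\,n-k(n)}B(s,M)^{k(n)}$ and exploiting positive upper density along a subsequence where $k(n_i)/n_i\to\overline{\mathbf{d}}(\{k_n\})$ to drive the base below $1$ for large $M$. The only cosmetic difference is that you reduce to $N=1$ via Corollary \ref{corg1}, whereas the paper instead takes a countable union over fixed initial blocks $a_1,\ldots,a_{k_N}$; both devices are standard and equivalent here.
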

%\begin{rem}
%	It should be mentioned that the limit set derived by Gauss transformation, i.e., the fractal set induced by continued fractions, is a typical example of $\xi$-regular iIFS with exponent of convergence $s_0=\tau(\mathbb{N})/2=1/2$.
%\end{rem}
\begin{proof}[Proof of Proposition \ref{main3}]
	Note that ${E}(D)\subset{E}_{\{k_n\}}(D)$, which gives a lower bound of Hausdorff dimension of ${E}_{\{k_n\}}(D)$. For the upper bound, we directly consider the canonical covering system:
	\begin{align*}
		{E}_{\{k_n\}}(D)&\subset\bigcap_{M=1}^{\infty}\bigcup_{N=1}^{\infty}\{x\in I\colon a_n(x)\in D,\ \forall n\in\mathbb{N},\ \text{and}\ a_{k_n}(x)\geq M,\ \forall n\geq N\},	\\
		&=\bigcap_{M=1}^{\infty}\bigcup_{N=1}^{\infty}\bigcup_{a_1,\ldots,a_{k_N}}{E}(M,N,a_1,\ldots,a_{k_N}),
	\end{align*}
	where
	\begin{equation*}
		{E}(M,N,a_1,\ldots,a_{k_N})\coloneqq\left\{\begin{aligned}
			&a_i(x)=a_i,\ 1\leq i\leq k_N, \\
			x\in I\colon &a_n(x)\in D,\ \forall n> k_N,	\\
			&a_{k_j}(x)\geq M,\ \forall j\geq N
		\end{aligned} \right\}.
	\end{equation*}
	Recall that the upper density of sequence $\{k_n\}$ is
	\begin{equation*}
		\overline{\mathbf{d}}(\{k_n\})=\limsup_{n\to\infty}\frac{\sharp\{k_j\}\cap\{1,2,\cdots,n\}}{n}.
	\end{equation*}
	By the definition of $\tau(D)$, for any $0<\varepsilon<\overline{\mathbf{d}}(\{k_n\})$, we have 
	\begin{equation*}
		\sum_{a_i\in D}\frac{1}{{a_i}^{\tau(D)(1+\varepsilon)}}<\infty.
	\end{equation*}
	We can choose $n_0=n_0(\varepsilon)$, such that there exist infinitely many $n\geq n_0$ satisfying
	\begin{equation*}
		\frac{k(n)}{n}>\overline{\mathbf{d}}(\{k_n\})-\varepsilon>0.
	\end{equation*}
	Then, we choose $M_0=M_0(\varepsilon)$, such that for all $M\geq M_0$
	\begin{equation*}
		\left(\sum_{a_i\in D}\frac{1}{{a_i}^{\tau(D)(1+\varepsilon)}}\right)^{\frac{1}{\overline{\mathbf{d}}(\{k_n\})-\varepsilon}-1}\cdot\sum_{a_{k_j}\geq M,\ a_{k_j}\in D}\frac{1}{{a_{k_j}}^{\tau(D)(1+\varepsilon)}}\leq 1,
	\end{equation*}
	since $\sum_{a_{k_j}\geq M,\ a_{k_j}\in D}\frac{1}{{a_{k_j}}^{\tau(D)(1+\varepsilon)}}$ tends to zero as $M$ increases. We directly estimate the $s$-dimensional Hausdorff measure
	\begin{align*}
		&\mathcal{H}^{s}({E}(M,N,a_1,\ldots,a_{k_N}))	\\
		\leq&\liminf_{n\to\infty}\sum_{\substack{a_i\in D,\ k_N< i\leq k_n \\ a_{k_j}\geq M,\ N< j\leq n}}\abs{I(a_1,a_2,\ldots,a_{k_n})}^{s}	\\
		\leq&\liminf_{n\to\infty}\prod_{i=1}^{k_N}\frac{1}{a_i^{2s}}\cdot\sum_{\substack{a_i\in D,\ k_N< i\leq k_n \\ a_{k_j}\geq M,\ N< j\leq n}}\prod_{i=k_N+1}^{k_n}\frac{1}{a_i^{2s}}	\\
		%&=\liminf_{n\to\infty}\prod_{i=1}^{k_N}\frac{1}{a_i^{2s}}\cdot\sum_{\substack{a_i\in D,\ k_N< i\leq k_n \\ a_{k_j}\geq M,\ N< j\leq n}}\prod_{k_N<i\leq k_n,\ i\notin\{k_j\}}\frac{1}{a_i^{2s}}\prod_{N<j\leq n}\frac{1}{a_{k_j}^{2s}}	\\
		=&\liminf_{n\to\infty}\prod_{i=1}^{k_N}\frac{1}{a_i^{2s}}\cdot\left(\sum_{a_i\in D}\frac{1}{a_i^{2s}}\right)^{(k_n-k_N)-(n-N)}\cdot\left(\sum_{a_{k_j}\geq M,\ a_{k_j}\in D}\frac{1}{a_{k_j}^{2s}}\right)^{n-N}	\\
		%&\leq\liminf_{n\to\infty}\prod_{i=1}^{k_N}\frac{1}{a_i^{2s}}\left(\sum_{a_i\in D}\frac{1}{a_i^{\tau(D)(1+\varepsilon)}}\right)^{(\frac{1}{\mathbf{d}(\{k_n\})-\varepsilon}-1)n}\cdot\left(\sum_{a_{k_j}\geq M,\ a_{k_j}\in D}\frac{1}{a_{k_j}^{\tau(D)(1+\varepsilon)}}\right)^n	\\
		\leq&\liminf_{n\to\infty}\prod_{i=1}^{k_N}\frac{1}{a_i^{2s}}\left(\left(\sum_{a_i\in D}\frac{1}{a_i^{\tau(D)(1+\varepsilon)}}\right)^{\frac{1}{\overline{\mathbf{d}}(\{k_n\})-\varepsilon}-1}\cdot\sum_{a_{k_j}\geq M,\ a_{k_j}\in D}\frac{1}{a_{k_j}^{\tau(D)(1+\varepsilon)}}\right)^n<\infty,
	\end{align*}
	where $s=\tau(D)(1+\varepsilon)/2$. Since $\varepsilon$ is arbitrary, we conclude that $\dim_H({E}_{\{k_n\}}(D))=\tau(D)/2$.
\end{proof}

By Proposition \ref{main3}, we immediately prove the case of positive upper density in Theorem \ref{main2}.

\bibliographystyle{plain}
%\bibliofont
\bibliography{bibtex}

$\newline$\textsc{Yuefeng Tang, School of Mathematics and Statistics, Wuhan University, Wuhan, Hubei, China}
$\newline$\textit{E-mail: }\texttt{tangyuefeng2001@whu.edu.cn}

\end{document}